\theoremstyle{definition}
\newtheorem{assumption}{Assumption}
\newtheorem{definition}{Definition}
\theoremstyle{plain}
\newtheorem{theorem}{Theorem}
\newtheorem{lemma}{Lemma}
\newtheorem{remark}{Remark}
\newtheorem{problem}{Problem}
\theoremstyle{definition}
\title{\LARGE \bf
A Platoon Formation Framework in a Mixed Traffic Environment}
\author{A M Ishtiaque Mahbub, {\itshape{Student Member, IEEE}}, Andreas A. Malikopoulos, {\itshape{Senior Member, IEEE}}
\thanks{This research was supported by ARPAE's NEXTCAR program under the award number DE-AR0000796.}
\thanks{The authors are with the Department of Mechanical Engineering, University of Delaware, Newark, DE 19716 USA (emails: \tt\small{mahbub@udel.edu};\tt\small{andreas@udel.edu}.)}}
\begin{document}

\maketitle
\thispagestyle{empty}
\pagenumbering{arabic}

\begin{abstract}
Connected and automated vehicles (CAVs) provide the most intriguing opportunity to reduce pollution, energy consumption, and travel delays. In this paper, we address the problem of vehicle platoon formation in a traffic network with partial CAV penetration rates. We investigate the interaction between CAV and human-driven vehicle (HDV) dynamics, and provide a rigorous control framework that enables platoon formation with the HDVs by only controlling the CAVs within the network. We present a complete analytical solution of the CAV control input and the conditions under which a platoon formation is feasible. We evaluate the solution and demonstrate the efficacy of the proposed framework using simulation.

\end{abstract}
\begin{IEEEkeywords}
Autonomous vehicles, Traffic control, Smart cities 
\end{IEEEkeywords}
\indent


\section{Introduction}
\IEEEPARstart{T}{he} implementation of an emerging transportation system with connected and automated vehicles (CAVs) enables a novel computational framework to provide real-time control actions that optimize energy consumption and associated benefits. From a control point of view, CAVs can alleviate congestion at different traffic scenarios, reduce emission, improve fuel efficiency, and increase passenger safety \cite{Guanetti2018}. 


Significant research efforts have been reported in the literature for CAVs to improve the vehicle- and network-level performances \cite{Malikopoulos2016a,Guanetti2018}. Several research efforts have been presented for coordinating CAVs in real time at different traffic scenarios such as on-ramp merging roadways, roundabouts, speed reduction zones, signal-free intersections, and traffic corridors \cite{Mahbub2019ACC,Malikopoulos2020,mahbub2020decentralized, mahbub2020Automatica-2,mahbub2020ACC-2}. These approaches are based on the strict assumption of 100\% penetration rate of CAVs having access to perfect communication (no errors or delays), which impose limitations for real-world implementation. 
In reality, the existence of 100\% CAV market penetration is not expected before 2060 \cite{alessandrini2015automatedmixed2060}. Therefore, the need for a mathematically rigorous and tractable control framework considering the co-existence of CAVs with human-driven vehicles (HDVs), which we refer in this paper as the \emph{mixed traffic environment}, are an essential transitory step.

One of the most important research directions pertaining to the mixed traffic environment has been the development of adaptive cruise controllers \cite{zheng2017platooning}, where a CAV preceded by a group of HDVs implements a control algorithm to optimize a given objective, e.g., improvement of fuel economy, minimization of backward propagating wave \cite{hajdu2019robust}, etc.  
In a mixed traffic environment, the presence of HDVs poses significant modelling and control challenges to the CAVs due to the stochastic nature of the human-driving behavior. Although previous research efforts aimed at enhancing our understanding of improving the efficiency through coordination of CAVs in a mixed traffic environment, deriving a tractable solution still remains a challenging control problem. Several research efforts reported in the literature implemented car-following models \cite{Zhao2018CTA} to have deterministic quantification of the HDV state. 
Other research efforts have employed learning-based frameworks \cite{kreidieh2018dissipating, wu2017framework}. Although these approaches have demonstrated quite impressive performance in simulation, they might impose challenges during the trial-and-error learning process in a real-world setting.

%
%
%
%
In this paper, our research hypothesis is that we can directly control the CAVs to force the trailing HDVs to form platoons, and thus indirectly control the HDVs. In this context, we address the problem of vehicle platoon formation in mixed traffic environment by only controlling the CAVs within the network. To the best of our knowledge, such approach has not yet been reported in the literature to date.

The contribution of this paper are: (i) the development of a comprehensive framework that can aim at creating platoon formations of HDVs led by a CAV in a mixed traffic environment, and (ii) an analytical solution of the control input of CAVs (Theorems \ref{theo:1} and \ref{theo:3}), along with the conditions under which the solution is feasible (Theorems \ref{theo:2} and \ref{theo:4}).
In our exposition, we seek to establish a rigorous control framework that enables the platoon formation in a mixed environment with associated boundary conditions.

The structure of the paper is organized as follows. In Section \ref{sec:pf}, we formulate the problem of platoon formation in a mixed traffic environment. In Section \ref{sec:solution}, we provide a detailed exposition of the proposed framework, and derive analytical solution with feasibility analysis. In Section \ref{sec:sim}, we present a numerical analysis to validate the effectiveness of the proposed framework. Finally, we provide concluding remarks and future research directions in Section \ref{sec:conc}.

\section{Problem Formulation}\label{sec:pf}
We consider a CAV followed by one or multiple HDVs traveling in a single-lane roadway of length $L\in \mathbb{R}^+$. We subdivide the roadway into a \emph{buffer zone} of length $L_{b}\in \mathbb{R}^+$, inside of which the HDVs' state information is estimated (Fig. \ref{fig:platoon_zone}) (top), and a \emph{control zone} of length $L_{c}\in \mathbb{R}^+$ such that $L=L_{b}+L_{c}$, where the CAV is controlled to form a platoon with the trailing HDVs, as shown in Fig. \ref{fig:platoon_zone} (bottom). The time that a CAV enters the buffer zone, the control zone, and exits the control zone is $t^b, t^c, t^f\in \mathbb{R}^+$, respectively.

Let $\mathcal{N}=\{1,\ldots, N\}$, where $N\in \mathbb{N}$ is the total number of vehicles traveling within the buffer zone at time $t=t^c$, be the set of vehicles considered to form a platoon. 
Here, the leading vehicle indexed by $1$ is the CAV, and the rest of the trailing vehicles in $\mathcal{N}\setminus\{1\}$ are HDVs.  
We denote the set of the HDVs following the CAV to be $\mathcal{N}_{\text{HDV}}=\{2,\ldots, N\}$.
Since the HDVs do not share their local state information with any external agents, we consider the presence of a \emph{coordinator} that gathers the state information of the trailing HDVs traveling within the buffer zone. The coordinator, which can be a group of loop-detectors or comparable sensory devices, then transmits the HDV state information to the CAV at each time instance $t\in[t^b, t^c]$ using standard vehicle-to-infrastructure communication protocol. 

The objective of the CAV $1$ is to derive and implement a control input (acceleration/deceleration) at time $t^c \in \mathbb{R}^+$ so that the platoon formation with trailing HDVs in $\mathcal{N}_{\text{HDV}}$ is completed within the control zone at a given time $t^p\in (t^c, t^f]$.
\begin{figure}[b]
    \centering
    \includegraphics[scale=0.36]{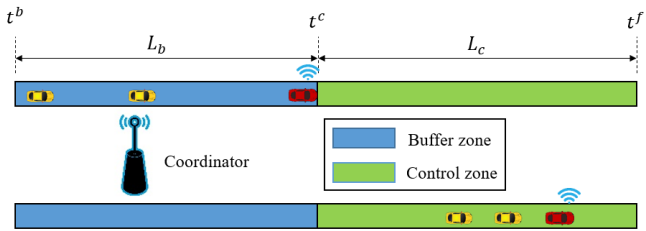}
    \caption{A CAV (red) traveling with two trailing HDVs (yellow), where the HDVs' state information is estimated (top scenario) by the coordinator within the buffer zone, and the platoon is formed (bottom scenario) by controlling the CAV inside the control zone.}
    \label{fig:platoon_zone}
\end{figure}
\begin{figure}
    \centering
    \includegraphics[scale=0.54]{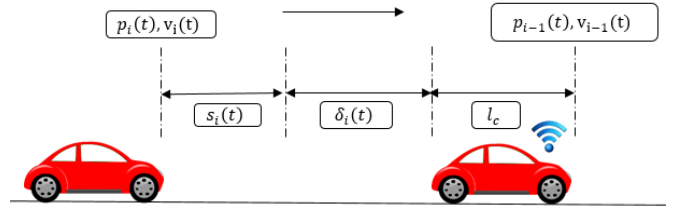}
    \caption{Predecessor-follower coupled car-following dynamic.}
    \label{fig:problem_formulation}
\end{figure}
%
In our framework, we model the longitudinal dynamics of each vehicle $i\in\mathcal{N}$ as a  double-integrator,
\begin{gather}\label{eq:dynamics_pv}
    \dot{p}_i(t) = v_i(t), \quad 
    \dot{v}_i(t) = u_i(t), \quad t\in\mathbb{R}^+,
\end{gather}
where $p_i(t)\in \mathcal{P}_i$, $v_i(t)\in \mathcal{V}_i$, and $u_i(t)\in\mathcal{U}_i$ are the position of the front bumper, speed, and control input (acceleration/deceleration) of vehicle $i\in \mathcal{N}$.
Let $\textbf{x}_{i}(t)=\left[p_{i}(t) ~ v_{i}(t)\right] ^{T}$ denote the state vector of each vehicle $i\in\mathcal{N}$, taking values in $\mathcal{X}_{i}%
=\mathcal{P}_{i}\times\mathcal{V}_{i}$. 

The speed $v_i(t)$ and control input $u_i(t)$ of each vehicle $i\in \mathcal{N}$ are subject to the following constraints,
\begin{align}\label{eq:state_control_constraints}
    0< v_{\min} \le v_i(t) &\le v_{\max}, \quad t\in\mathbb{R}^+,\nonumber\\
    u_{\min} \le u_i(t) &\le u_{\max}, \quad t\in\mathbb{R}^+,
\end{align}
where $v_{\min}$ and $v_{\max}$ are the minimum and maximum allowable speed of the considered roadway, respectively, and $u_{\min}$ and $u_{\max}$ are the minimum and maximum control input of all vehicles $i\in \mathcal{N}$, respectively.

The dynamics \eqref{eq:dynamics_pv} of each vehicle $i\in\mathcal{N}$ can take different forms based on the consideration of connectivity and automation.
For the CAV $1\in\mathcal{N}$, the control input $u_1(t)$ can be derived and implemented within the control zone. We introduce and discuss the structure of the control zone in detail in Section \ref{sec:solution}.
%
To model the HDV dynamics, we need the following definitions.
\begin{definition}
The dynamic following spacing $s_i(t)$ between two consecutive vehicles $i \text{ and }(i-1)\in\mathcal{N}$ is,
\begin{equation}\label{eq:s_i}
   { s_i(t)= \rho_i\cdot v_i(t)+ s_0,}
\end{equation}
where  $\rho_i$ denotes a desired time gap that each HDV $i\in\mathcal{N}_{\text{HDV}}$ maintains while following the preceding vehicle, and $s_0$ is the standstill distance denoting the minimum bumper-to-bumper gap at stop.
\end{definition}

\begin{definition}\label{def:2}
The \textit{platoon gap} $\delta_i(t)$ is the difference between the bumper-to-bumper inter-vehicle spacing and the dynamic following spacing {$s_i(t)$} (see Fig. \ref{fig:problem_formulation}) between two consecutive vehicles $i \text{ and }(i-1)\in\mathcal{N}$, i.e.,
\begin{equation}\label{eq:delta}
    \delta_i(t)=p_{i-1}(t)- p_i(t)-s_i(t)-l_c,
\end{equation}
where $l_c$ is the length of each vehicle $i\in\mathcal{N}$.
\end{definition}
In this paper, we adopt the optimal velocity car-following model \cite{bando1995dynamical}, to define the predecessor-follower coupled dynamics (see Fig. \ref{fig:problem_formulation}) of each HDV $i\in \mathcal{N}_{\text{HDV}}$ as follows,
\begin{gather}\label{eq:hdv_dynamics}
    {{u}_i(t) = \alpha (V_i(\delta_i(t-\eta_i),s_i(t-\eta_i)) -v_i(t-\eta_i)),}
\end{gather}
where $\alpha$ denotes the control gain representing the driver's sensitivity coefficient, {$\eta_i$ is the driver's perception delay with a known upper bound $\bar{\eta}$}, and $V_i(\delta_i(t),s_i(t))$ denotes the equilibrium speed-spacing function,
\begin{gather}
V_i(\delta_i(t),s_i(t))=
\begin{array}
[c]{ll}%
 {\frac{v_{\max}}{2}(\tanh(\delta_i(t))}{+\tanh(s_i(t))).}
\end{array}
\label{eq:V(s)}
\end{gather}

\begin{remark}\label{rem:ovm}
Based on \eqref{eq:V(s)}, the driving behavior of each HDV $i\in\mathcal{N}_{\text{HDV}}$ depends on two different modes; (a) \emph{decoupled free-flow mode}: when $\delta_i(t)>0$, each HDV converges to the maximum allowable speed $v_{\max}$, and cruises through the roadway decoupled from the state of the preceding vehicle, 
and (b) \emph{coupled following mode}: when $ \delta_i(t)\le 0$, the HDV dynamics becomes coupled with the state of the preceding vehicle $(i-1)\in\mathcal{N}$, and $v_i(t)$ converges to $v_{i-1}(t)$.
Note that, if there is no preceding vehicle, we set $\delta_i(t)=\infty$ that activates the decoupled free-flow mode, which results in $v_i(t)$ converging to $v_{\max}$.
\end{remark}
\begin{remark}\label{rem:platoon-stability}
The car-following model \eqref{eq:hdv_dynamics} is platoon-stable, i.e., bounded speed fluctuation between two consecutive vehicles in coupled following mode decays exponentially as time progresses \cite{bando1995dynamical}.
\end{remark}


We now provide the following definitions that are necessary for the formulation of our proposed platoon formation framework.
\begin{definition}\label{def:1}
The information set $\mathcal{I}_1(t)$ of the CAV $1\in\mathcal{N}$ has the following structure,
\begin{equation}
    \mathcal{I}_1(t) = \{\textbf{x}_1(t), \textbf{x}_{2:N}(t)\},
        \quad \quad t\in[t^b,t^c],
\end{equation}
where $\textbf{x}_{2:N}(t)=[\textbf{x}_2(t),\ldots, \textbf{x}_N(t)]^T$.
\end{definition}

\begin{definition}\label{def:3}
The steady-state traffic flow between two consecutive vehicles $i\text{ and }(i-1)\in\mathcal{N}$ are established if the platoon gap $\delta_i(t)$ does not vary with time, and {speed fluctuation $\Delta v_i(t):=v_i(t)-v_{i-1}(t)$ is zero} \cite{rothery1992car}, i.e., 
\begin{gather}
 \delta_i(t)= c_i,~c_i\in \mathbb{R}, \text{ and }{\Delta v_i(t) =0}.
\end{gather}
\end{definition}


We now formalize the problem of platoon formation in mixed environment addressed in the paper as follows.
\begin{problem}\label{prob:1}
    Given the information set $\mathcal{I}_1(t)$ at time $t=t^c$, the objective of the CAV $1\in\mathcal{N}$ is to derive the control input $u_1(t)$ so that the HDVs in ${N}_{HDV}$ are forced to form a platoon at some time $t^p\in (t^c, t^f]$ within the control zone while the following conditions hold,
    \begin{align}
    v_i(t) = v_{eq},~  &\delta_i(t)=c_i,~c_i\le 0, \quad \forall t\ge t^p,~ \forall i\in\mathcal{N}, \nonumber\\
   &\text{ subject to: }{ \eqref{eq:state_control_constraints},~ p_1(t^p)\le L_c,} \label{eq:platoon_cond_1}
    \end{align}
    where, $v_{eq}$ denotes the equilibrium platoon speed.
\end{problem}
\begin{remark}\label{rem:delta>0}
In our problem formulation, we impose the restriction that at $t=t^c$, there exists at least one HDV $i\in\mathcal{N}_{\text{HDV}}$ such that $\delta_i(t^c)>0$. To simplify the formulation and without loss of generality, we consider that $\delta_N(t^c)>0$. This ensures that we do not have the trivial case where the group of vehicles in $\mathcal{N}$ has already formed a platoon at $t=t^c$. 
\end{remark}
In the modelling framework presented above, we impose the following assumption.
\begin{assumption}\label{assum:1}
The CAV is on a decoupled free-flow mode (Remark \ref{rem:ovm}) while all vehicles have reached steady-state traffic flow (Definition \ref{def:3}) within $[t^b, t^c]$.
\end{assumption}

\begin{remark}\label{rem:control_zone}
{We restrict the control of the CAV $1$ only within the control zone so that we have a finite control horizon $[t^c,t^f]$. Outside the control zone, the CAV dynamics follows the car-following model in \eqref{eq:hdv_dynamics}.}
\end{remark}

\begin{lemma}\label{lem:1}
For each vehicle $i\in\mathcal{N}$, $v_i(t^c)=v_{\max}$.
\end{lemma}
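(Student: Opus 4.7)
The plan is to combine Assumption \ref{assum:1} (decoupled free-flow mode for the CAV and steady-state traffic flow for every vehicle on $[t^b,t^c]$) with the car-following dynamics \eqref{eq:hdv_dynamics} and the definition of steady-state traffic flow in Definition \ref{def:3}. I would argue first for the leader and then propagate the conclusion down the string by induction.

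First, I would handle the CAV $1\in\mathcal{N}$. By Remark \ref{rem:control_zone}, outside the control zone (and in particular on the buffer-zone interval $[t^b,t^c]$), the CAV evolves under the car-following model \eqref{eq:hdv_dynamics}. Since the CAV has no preceding vehicle, the convention in Remark \ref{rem:ovm} sets $\delta_1(t)=\infty$, which places the CAV in the decoupled free-flow mode, consistent with Assumption \ref{assum:1}. In this mode, Remark \ref{rem:ovm} states that the speed converges to $v_{\max}$, and Assumption \ref{assum:1} asserts that steady state has been reached by time $t^c$. Combining these two gives $v_1(t^c)=v_{\max}$.

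Next, I would handle the trailing HDVs by induction on $i\in\{2,\ldots,N\}$. By Assumption \ref{assum:1} each pair of consecutive vehicles has reached steady-state traffic flow on $[t^b,t^c]$, so by Definition \ref{def:3} we have $\Delta v_i(t^c)=v_i(t^c)-v_{i-1}(t^c)=0$, i.e.\ $v_i(t^c)=v_{i-1}(t^c)$. Starting from the base case $v_1(t^c)=v_{\max}$ established above, a straightforward induction yields $v_i(t^c)=v_{\max}$ for every $i\in\mathcal{N}$.

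The main obstacle, if any, is the subtle invocation of the OVM in the leader case: the equilibrium speed-spacing function \eqref{eq:V(s)} does not literally equal $v_{\max}$ unless both $\tanh(\delta_i)$ and $\tanh(s_i)$ are saturated to $1$, but Remark \ref{rem:ovm} is explicit that the decoupled free-flow mode with $\delta_1=\infty$ drives the speed to $v_{\max}$. Once that modeling convention is accepted, the rest of the argument is just propagation of the steady-state equality $\Delta v_i=0$ along the string and is routine.
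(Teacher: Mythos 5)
Your proof is correct and follows essentially the same route as the paper: establish $v_1(t^c)=v_{\max}$ from the fact that the uncontrolled CAV obeys \eqref{eq:hdv_dynamics} in free-flow mode (Remarks \ref{rem:ovm} and \ref{rem:control_zone}), then propagate down the string using Assumption \ref{assum:1}. The only cosmetic difference is in the second step: you invoke the $\Delta v_i(t)=0$ clause of Definition \ref{def:3} and induct explicitly, whereas the paper argues from the constancy of $\delta_i(t)$ via the mode dichotomy of Remark \ref{rem:ovm} (which implicitly requires the same induction); both are valid readings of the same assumption.
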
 
\begin{proof}
{Since the control input $u_1(t)$ of the uncontrolled CAV $1\in\mathcal{N}$ is determined by \eqref{eq:hdv_dynamics} outside the control zone (Remark \ref{rem:control_zone}), and due to the fact that any vehicle following the dynamics in \eqref{eq:hdv_dynamics} converges to the maximum speed $v_{max}$ without the presence of a preceding vehicle (Remark \ref{rem:ovm}), $v_1(t)$ converges to $v_{\max}$.}

For a HDV $i\in\mathcal{N}_{\text{HDV}}$ traveling under the steady-state traffic flow condition (Assumption \ref{assum:1}), $\delta_i(t)$ does not vary with time. This implies that each HDV $i$ either travels with decoupled free-flow mode with  $v_i(t)=v_{\max}$, or with coupled following mode with $v_i(t)=v_{i-1}(t)=v_{\max}$.
\end{proof} 
In what follows, first, we address Problem \ref{prob:1} considering only two vehicles, i.e., $N=2$, and then generalize the analysis for multiple HDVs, i.e., $N>2$.
%
\section{Vehicle Platoon Formation Framework}\label{sec:solution}
{For $N=2$, CAV $1\in\mathcal{N}$ is trailed by HDV $2\in\mathcal{N}_{\text{HDV}}$.} The information set $\mathcal{I}_1(t^c)$ includes $v_1(t^c)=v_2(t^c)=v_{\max}$ (Lemma \ref{lem:1}), and $\delta_2(t^c)>0$ (Remark \ref{rem:delta>0}).
\subsection{Control input of CAV $1\in\mathcal{N}$}
The following result characterizes the control structure of CAV $1\in\mathcal{N}$ for the platoon formation framework.
\begin{lemma}\label{lem:2}
For a CAV $1\in\mathcal{N}$ travelling with a trailing HDV $2\in\mathcal{N}_{\text{HDV}}$,
        (i) a platoon formation does not occur when $u_1(t)\ge0$ for all $t\in [t^c,t^f]$, and
        (ii) {a platoon formation occurs with an appropriate control zone of length $L_c$ when $u_1(t)<0$ for all $t\in [t^{c},t^{a}],~ t^c< t^a < t^f$}.
\end{lemma}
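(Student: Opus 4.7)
My plan is to prove parts (i) and (ii) as two separate directions, both leaning on Lemma \ref{lem:1} (which gives $v_1(t^c)=v_2(t^c)=v_{\max}$), on Remark \ref{rem:delta>0} (which gives $\delta_2(t^c)>0$), and on the behavioral characterization of the OVM in Remark \ref{rem:ovm}.

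For part (i), the first step is to observe that the upper-speed constraint $v_1(t)\le v_{\max}$ in \eqref{eq:state_control_constraints}, combined with $v_1(t^c)=v_{\max}$ and $u_1(t)\ge 0$ on $[t^c,t^f]$, forces $v_1(t)\equiv v_{\max}$ and hence $u_1(t)\equiv 0$ on the interval. Next, because $\delta_2(t^c)>0$, HDV $2$ starts in the decoupled free-flow mode (Remark \ref{rem:ovm}), and since $v_2(t^c)$ already equals the target $v_{\max}$, it stays at $v_{\max}$. With both vehicles moving at $v_{\max}$, the bumper-to-bumper spacing $p_1(t)-p_2(t)$ is constant and $s_2(t)=\rho_2 v_{\max}+s_0$ is constant, so by \eqref{eq:delta} we have $\delta_2(t)=\delta_2(t^c)>0$ for every $t\in[t^c,t^f]$, violating the condition $\delta_2\le 0$ in \eqref{eq:platoon_cond_1}. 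Hence no platoon forms.

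For part (ii), the strategy is a two-phase argument. In the first phase, on $[t^c,t^a]$ the constraint $u_1(t)<0$ makes $v_1(t)$ strictly decrease below $v_{\max}$, while HDV $2$, still in the decoupled free-flow mode as long as $\delta_2(t)>0$, is driven by \eqref{eq:hdv_dynamics} toward $v_{\max}$ and therefore maintains $v_2(t)\approx v_{\max}$. This creates a strictly positive speed differential $\Delta v_2(t)=v_2(t)-v_1(t)>0$, which by \eqref{eq:delta} makes $\delta_2(t)$ strictly decrease; hence there exists some finite time at which $\delta_2$ crosses zero. In the second phase, once $\delta_2\le 0$, HDV $2$ enters the coupled following mode (Remark \ref{rem:ovm}), so $v_2$ is attracted to $v_1$, and by the platoon-stability result of Remark \ref{rem:platoon-stability} the pair converges to a steady-state traffic flow in the sense of Definition \ref{def:3}, i.e., $\Delta v_2\to 0$ and $\delta_2\to c_2\le 0$, at some finite $t^p$. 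Finally, I would argue that any $L_c\ge p_1(t^p)-p_1(t^c)+p_1(t^c)$, or equivalently any $L_c\ge \int_{t^c}^{t^p}v_1(\tau)\,d\tau+p_1(t^c)$, suffices to guarantee $p_1(t^p)\le L_c$.

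The main obstacle will be rigorously pinning down the finite-time transition of $\delta_2$ through zero in the presence of the perception delay $\eta_i$ and the nonlinear $\tanh$-based equilibrium velocity \eqref{eq:V(s)}, because strict monotonicity of $\delta_2$ and the OVM convergence in Remark \ref{rem:platoon-stability} are only asymptotic statements. My plan is to keep this step qualitative here, since Lemma \ref{lem:2} only asserts existence of an appropriate $L_c$, and to defer quantitative bounds on $t^p$ and $L_c$ to the subsequent Theorems \ref{theo:1}--\ref{theo:4}.
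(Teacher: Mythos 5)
Your proposal is correct and follows essentially the same route as the paper: part (i) argues that $\delta_2$ stays positive so the condition in \eqref{eq:platoon_cond_1} can never hold, and part (ii) uses the deceleration-induced speed differential $v_2>v_1$ to drive $\delta_2$ to zero, followed by the coupled following mode and platoon stability to reach steady state within a suitably long control zone. Your part (i) is in fact slightly more complete than the paper's one-line version, since you justify \emph{why} $\delta_2$ remains constant by noting that the speed cap $v_1\le v_{\max}$ together with $v_1(t^c)=v_{\max}$ forces $u_1\equiv 0$.
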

\begin{proof}

Part (i):  For $u_1(t)\ge 0$ for all $t \ge t^c$, we have $\delta_2(t)>0$ for all $t \ge t^c$, which implies that according to \eqref{eq:platoon_cond_1}, no platoon formation will occur.

Part (ii): {For $u_1(t)<0$ within an arbitrary time horizon $[t^c,t^a], t^c< t^{a} < t^f $, we have $v_1(t^a) < v_1(t^c)$. Since ${v_1(t^c)}=v_2(t^c)=v_{\max}$ (Lemma \ref{lem:1}), we have $v_2(t^a)>v_1(t^a)$. This implies that $\delta_2(t)$ decreases for all $t\ge t^a$. As time $t$ progresses, given an appropriate control zone of length $L_c$, we have $\delta_2(t)\rightarrow 0$, which guarantees a platoon formation.}
\end{proof}
%
When the CAV $1\in\mathcal{N}$ applies a control input $u_1(t),~t\in[t^c, t^p]$ based on Lemma \ref{lem:2} to form a platoon with the HDV $2\in\mathcal{N}_{\text{HDV}}$ at time $t=t^p$, two sequential steps take place, namely, (i) the \emph{platoon transition} step, where the HDV $2$ transitions from the decoupled free-flow mode to the coupled following mode at time $t=t^s, ~t^c< t^s< t^p$ such that $\delta_2(t^s)=0$, and (ii) the \emph{platoon stabilization} step, where $v_2(t)$ converges to $v_1(t)$ at time $t=t^p$ such that \eqref{eq:platoon_cond_1} is satisfied, and the platoon becomes stable.
\begin{definition}\label{def:tau}
The platoon transition duration $\tau^t$ is the time required for the completion of the platoon transition step, i.e., $\tau^t=t^s-t^c$, and the platoon stabilization duration $\tau^s$ is the time required for the completion of the platoon stabilization step, i.e., $\tau^s=t^p-t^s$. Hence, we have $t^p=t^c+\tau^{t}+\tau^{s}$.
\end{definition}
\begin{remark}\label{rem:tau_ps}
{
The platoon stabilization duration is $\tau^s=\eta_i+\tau^r$, where $\eta_i$ is the perception delay of HDV $i\in\mathcal{N}_{\text{HDV}}$, and $\tau^r$ is the response time of $\eqref{eq:hdv_dynamics}$ which depends on the driver’s sensitivity coefficient $\alpha$, maximum allowable speed fluctuation, and the choice of equilibrium speed-spacing function in \eqref{eq:V(s)}, and can be computed using stability analysis presented in \cite{bando1995dynamical,wilson2011car,zhang2021improved}. {Note that, for $N\ge 2$,  additional nonlinearities may impact the computation of $\tau^s$.} 
In our formulation, we incorporate the upper bound of the perception delay $\bar{\eta}$ to achieve robustness such that $\tau^s=\bar{\eta}+\tau^r$, and consider that $\tau^r$ is given a priori. Thus we focus only on the analysis of the platoon transition time $\tau^t$.}
\end{remark} 
Using Lemma \ref{lem:2}, we construct the structure of the control input $u_1(t)$ for the CAV $1\in\mathcal{N}$ for generating a platoon with the trailing HDV $2\in\mathcal{N}$ at time $t^p\in(t^c, t^f]$,
\begin{gather}\label{eq:control_structure}
 u_{1}(t) = \left\{
\begin{array}
[c]{ll}%
         u_p, ~u_p\in[u_{\min},0), & t\in[t^c, t^s],\\
        0, ~  & t\in( t^s,t^f].
\end{array}
\right.
\end{gather}
According to \eqref{eq:control_structure}, the realization of the control input $u_1(t)$ of the CAV $1\in\mathcal{N}$, which is $u_p\in(0,u_{\min}]$ in $t\in[t^c, t^s]$, yields a linearly decreasing $v_1(t)$ in $t\in[t^c, t^s]$.

The following result provides the unconstrained relation between the platoon transition duration $\tau^t$ and CAV control input parameter $u_p$.

\begin{theorem}\label{theo:1}
For a CAV $1\in\mathcal{N}$ and a trailing HDV $2\in\mathcal{N}_{\text{HDV}}$, there exists an unconstrained control input parameter $u_p$ in \eqref{eq:control_structure} such that a vehicle platoon can be formed with HDV $2\in\mathcal{N}$ at time $t=t^p$ according to the following condition,
\begin{equation}\label{eq:u_t_relation_1}
    2\delta_2(t^c) + u_p\cdot(\tau^t)^2 = 0.
\end{equation}
\end{theorem}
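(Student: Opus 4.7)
The plan is to derive the relation \eqref{eq:u_t_relation_1} by explicitly integrating the closed-form dynamics of both vehicles over the platoon transition interval $[t^c, t^s]$ and then imposing the terminal condition $\delta_2(t^s) = 0$ that defines the end of this step.

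First, I would fix the regime of each vehicle on $[t^c, t^s]$. By the definition of the platoon transition step, $\delta_2(t) > 0$ on $[t^c, t^s)$, so by Remark \ref{rem:ovm} the HDV $2$ remains in decoupled free-flow mode throughout this interval, and by Assumption \ref{assum:1} it has already converged to $v_{\max}$; combined with Lemma \ref{lem:1} giving $v_2(t^c) = v_{\max}$, this justifies treating $v_2(t) \equiv v_{\max}$ on $[t^c, t^s]$, so $p_2(t) - p_2(t^c) = v_{\max}(t - t^c)$ and $s_2(t) \equiv s_2(t^c)$ by \eqref{eq:s_i}. For the CAV, \eqref{eq:control_structure} gives constant $u_1(t) = u_p$ on this interval, so $v_1(t) = v_{\max} + u_p(t-t^c)$ and $p_1(t) - p_1(t^c) = v_{\max}(t-t^c) + \tfrac{1}{2}u_p(t-t^c)^2$.

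Next, I would compute the increment of the platoon gap \eqref{eq:delta} from $t^c$ to $t^s$. Subtracting the two expressions for position and using the invariance of $s_2$, the $v_{\max}(t-t^c)$ terms cancel and I obtain $\delta_2(t^s) - \delta_2(t^c) = \tfrac{1}{2} u_p (\tau^t)^2$, where $\tau^t = t^s - t^c$ by Definition \ref{def:tau}. Imposing $\delta_2(t^s) = 0$, which is exactly the condition defining the end of the platoon transition step, immediately yields $2\delta_2(t^c) + u_p(\tau^t)^2 = 0$.

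Existence of an unconstrained $u_p$ solving \eqref{eq:u_t_relation_1} then follows by inspection: given any desired $\tau^t > 0$, set $u_p = -2\delta_2(t^c)/(\tau^t)^2$, which is strictly negative because $\delta_2(t^c) > 0$ by Remark \ref{rem:delta>0}, consistent with Lemma \ref{lem:2}(ii). The main subtlety I expect is the justification that $v_2$ truly stays pinned at $v_{\max}$ throughout $[t^c, t^s]$ despite the perception delay $\eta_2$ in \eqref{eq:hdv_dynamics}; this is where I would explicitly invoke the steady-state/free-flow content of Assumption \ref{assum:1} and Remark \ref{rem:ovm} to discard any transient response of HDV $2$ during the transition.
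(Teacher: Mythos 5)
Your proposal is correct and follows essentially the same route as the paper's proof: integrate the constant-deceleration CAV trajectory and the constant-speed HDV trajectory over $[t^c,t^s]$, use the free-flow regime and Lemma \ref{lem:1} to pin $v_2\equiv v_{\max}$ and hence $s_2(t^s)=s_2(t^c)$, and impose $\delta_2(t^s)=0$ to cancel the linear terms and obtain \eqref{eq:u_t_relation_1}. Your added remarks on the sign of $u_p$ and on the perception delay are consistent with the paper's surrounding discussion but not needed beyond what the paper itself argues.
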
 
\begin{proof}
{At $t^s=t^c+\tau^t$, we require $\delta_2(t^s)=0$ implying $p_1(t^s)-p_2(t^s)= s_2(t^s)+l_c$, which we expand as follows. Using \eqref{eq:dynamics_pv} at time $t^s=t^c+\tau^t$, we have $p_{1}(t^c+\tau^t) = p_{1}(t^c) + v_{1}(t^c)\cdot \tau^t+ \frac{1}{2} u_p\cdot (\tau^t)^2$.
Based on Lemma \ref{lem:1}, $v_1(t^c)=v_{max}$. For HDV $2\in\mathcal{N}_{\text{HDV}}$, $\delta_2(t)>0$ (Remark \ref{rem:delta>0}) until the platoon transition step at time $t=t^s$. This implies, that HDV $2$ travels with decoupled free-flow mode as in \eqref{eq:hdv_dynamics}, and $v_2(t)=v_{\max}$ for all $t\in [t^c, t^s]$ (Lemma \ref{lem:1}). Using \eqref{eq:hdv_dynamics} for HDV $2$ at time $t^s = t^c+\tau^t$, we have, $v_2(t^c+\tau^t) = v_2(t^c)=v_{max}$ and $p_2(t^c+\tau^t) = p_2(t^c) + v_2(t^c)\cdot \tau^t$. Substituting the last equation into \eqref{eq:s_i}, we have $s_2(t^s)=s_2(t^c)$, and hence 
$p_1(t^c) + v_1(t^c)\cdot \tau^t+ \frac{1}{2} u_p \cdot (\tau^t)^2- p_2(t^c) - v_2(t^c)\cdot \tau^t = s_2(t^c)+l_c. \label{eq:theo1:6}$
Simplifying using \eqref{eq:delta}, the result follows.}
\end{proof}

\begin{remark}
From \eqref{eq:u_t_relation_1}, as $u_p\rightarrow 0$, we have $\tau^t \rightarrow \infty$, which implies that platoon formation will never occur. If $u_p>0$, then \eqref{eq:u_t_relation_1} yields an infeasible $\tau^t$. Therefore, $u_p$ has to be strictly negative for platoon formation. Note, from \eqref{eq:u_t_relation_1}, for $\delta_2(t)>0$ and $t\in \mathbb{R}^+$, we have $u_p<0$.
\end{remark}
\subsection{Feasibility of the platoon formation time, $t^p$}
In Theorem \ref{theo:1}, we do not explicitly incorporate the state and control constraints in \eqref{eq:state_control_constraints}, and the terminal constraint in \eqref{eq:platoon_cond_1}. For a given platoon formation time $t^p$, the corresponding control input derived from \eqref{eq:u_t_relation_1} can violate constraints in \eqref{eq:state_control_constraints}. In what follows, we present Lemmas \ref{lem:3} and \ref{lem:4} that provide a feasible region of $\tau^t$ that yields an admissible control input parameter $u_p$ in \eqref{eq:u_t_relation_1}.
\begin{lemma}\label{lem:3}
For CAV $1\in\mathcal{N}$, the platoon transition duration $\tau^t$ subject to the state and control constraints in \eqref{eq:state_control_constraints} is feasible if the following condition holds,
\begin{gather}
   \tau^t \ge \max\bigg\{ \bigg(\frac{-2\delta_2(t^c)}{u_{\min}}\bigg)^{\frac{1}{2}}, \frac{2\delta_2(t^c)}{v_1(t^c)-v_{\min}} \bigg\}. \label{eq:lem:3}
\end{gather}
\end{lemma}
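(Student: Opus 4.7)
The plan is to derive both lower bounds in the max expression directly from the characterizing equation \eqref{eq:u_t_relation_1} of Theorem \ref{theo:1}, by imposing separately the control-input bound $u_1(t)\ge u_{\min}$ and the speed bound $v_1(t)\ge v_{\min}$. Since the control profile \eqref{eq:control_structure} has $u_p\in[u_{\min},0)$ during $[t^c,t^s]$ and $u_1\equiv 0$ thereafter, and since $\delta_2(t^c)>0$ (Remark \ref{rem:delta>0}), we can treat the inequalities $u_1\le u_{\max}$ and $v_1\le v_{\max}$ as automatically satisfied, so only the lower bounds are binding.

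First I would solve \eqref{eq:u_t_relation_1} for $u_p$ to obtain
\begin{equation*}
u_p = -\frac{2\delta_2(t^c)}{(\tau^t)^2}.
\end{equation*}
Requiring $u_p\ge u_{\min}$ and noting $u_{\min}<0$, a simple rearrangement yields $(\tau^t)^2 \ge -2\delta_2(t^c)/u_{\min}$, which gives the first term in the max expression after taking square roots.

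Next I would turn to the speed constraint. Since $u_p<0$ on $[t^c,t^s]$, $v_1$ is monotonically decreasing on this interval, so it suffices to enforce the constraint at the endpoint $t^s$. Using \eqref{eq:dynamics_pv} together with the constant control $u_p$ on $[t^c,t^s]$ gives $v_1(t^s)=v_1(t^c)+u_p\tau^t$, and the requirement $v_1(t^s)\ge v_{\min}$ becomes
\begin{equation*}
v_1(t^c) - \frac{2\delta_2(t^c)}{\tau^t} \ge v_{\min},
\end{equation*}
which, after rearranging (noting that $v_1(t^c)=v_{\max}>v_{\min}$ by Lemma \ref{lem:1}, so the denominator is strictly positive), produces the second term in the max expression. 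Taking the stronger of the two lower bounds yields \eqref{eq:lem:3}.

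The argument is essentially algebraic, so I do not anticipate a technical obstacle; the one subtlety worth being careful about is the sign handling when dividing by $u_{\min}<0$ and the justification that only the lower state/control bounds are active. I would also explicitly note that the terminal position constraint $p_1(t^p)\le L_c$ from \eqref{eq:platoon_cond_1} is intentionally not addressed here, since it depends on the control-zone length $L_c$ and is the subject of the companion feasibility analysis (Lemma \ref{lem:4}), so it is cleanly decoupled from the present statement.
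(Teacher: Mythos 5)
Your proposal is correct and follows essentially the same route as the paper's proof: both lower bounds are obtained by substituting $u_p=-2\delta_2(t^c)/(\tau^t)^2$ from \eqref{eq:u_t_relation_1} into the constraints $u_p\ge u_{\min}$ and $v_1(t^c+\tau^t)=v_1(t^c)+u_p\tau^t\ge v_{\min}$, and then taking the maximum of the two resulting bounds. The sign handling when dividing by $u_{\min}<0$ and the observation that only the lower state/control bounds can be active match the paper's (more tersely stated) argument.
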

\begin{proof}
Suppose that, for CAV $1\in\mathcal{N}$, $u_p=u_{\min}$ yields a corresponding platoon transition duration $\tau^{t_1}$. From \eqref{eq:u_t_relation_1}, we have $(\tau^{t_1})^2 = \frac{-2\delta_2(t)}{u_{\min}}$. 
Therefore, for any $\tau^t$ to be feasible such that {$u_p\in [u_{\min},0)$}, we require $\tau^t\ge \tau^{t_1}$, which yields the inequality with the first term in \eqref{eq:lem:3}.

Now, suppose that for CAV $1\in\mathcal{N}$, a platoon transition duration $\tau^{t}$ has associated control input parameter $u_p$ derived from \eqref{eq:u_t_relation_1}.
Using \eqref{eq:dynamics_pv}, we have,
   $v_1(t^c+\tau^{t})=v_1(t^c)+u_p \cdot \tau^{t}.$
Since $u_p\in[u_{\min},0)$, we require that $v_1(t^c+\tau^t)\ge v_{\min}$ to satisfy the state constraint in \eqref{eq:state_control_constraints}. Substituting $v_1(t^c+\tau^t)$ in the above inequality, we get $ u_p\cdot \tau^t \ge v_{\min}-v_1(t^c)$. Finally, substituting $u_p$ from \eqref{eq:u_t_relation_1} in the above equation yields the inequality with the second term in \eqref{eq:lem:3}. 

Finally, since both above inequalities yield lower bounds on $\tau^t$, we simply take their maximum and get \eqref{eq:lem:3}.
\end{proof}
\begin{remark}\label{rem:v_min}
The minimum speed value $v_{\min}$ in \eqref{eq:lem:3} indicates the allowable speed perturbation during the platoon stabilization step. Hence, $v_{\min}$ should be selected appropriately to ensure local stability of the platoon \cite{wilson2011car, zhang2021improved}.
\end{remark}
\begin{lemma}\label{lem:4}
For the CAV $1\in\mathcal{N}$ subject to the control input $\eqref{eq:control_structure}$, the following condition must hold in order to complete platoon formation at time $t=t^p$ within the control zone of length $L_c$,
\begin{equation}
    \tau^t \le  \frac{\phi_1}{2}+\frac{\sqrt{\phi_1^2+4\phi_2}}{2},\label{eq:lem:4}
\end{equation}
where, $\phi_1:=\frac{L_c+\delta_2(t^c)-v_1(t^c)\cdot (\tau^r+\bar{\eta})}{v_1(t^c)}$, and $\phi_2:=\frac{2\delta_2(t^c)\cdot (\tau^r+\bar{\eta})}{v_1(t^c)}$.
\end{lemma}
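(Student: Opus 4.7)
The plan is to convert the terminal constraint $p_1(t^p)\le L_c$ from \eqref{eq:platoon_cond_1} into a scalar inequality in the single unknown $\tau^t$, using the piecewise control structure \eqref{eq:control_structure} together with the algebraic relation $u_p\cdot(\tau^t)^2 = -2\delta_2(t^c)$ supplied by Theorem \ref{theo:1}. Without loss of generality I will measure longitudinal position from $p_1(t^c)=0$, so that $p_1(t^p)\le L_c$ is exactly the requirement that the platoon forms before the CAV exits the control zone.

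First I would propagate the CAV dynamics piecewise. On $[t^c,t^s]$ the CAV decelerates with $u_1(t)=u_p$, so $p_1(t^s) = v_1(t^c)\tau^t + \tfrac{1}{2}u_p(\tau^t)^2$ and $v_1(t^s)=v_1(t^c)+u_p\tau^t$. On $(t^s,t^p]$ we have $u_1(t)=0$ by \eqref{eq:control_structure}, so the CAV cruises at $v_1(t^s)$ for the stabilization duration $\tau^s=\tau^r+\bar{\eta}$ (Remark \ref{rem:tau_ps}), giving
\begin{equation*}
p_1(t^p) = v_1(t^c)\tau^t + \tfrac{1}{2}u_p(\tau^t)^2 + \bigl(v_1(t^c)+u_p\tau^t\bigr)\,(\tau^r+\bar{\eta}).
\end{equation*}

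Next I would eliminate $u_p$ using Theorem \ref{theo:1}: substituting $\tfrac{1}{2}u_p(\tau^t)^2 = -\delta_2(t^c)$ and $u_p\tau^t = -2\delta_2(t^c)/\tau^t$ and imposing $p_1(t^p)\le L_c$ yields, after multiplying through by $\tau^t>0$ and dividing by $v_1(t^c)>0$,
\begin{equation*}
(\tau^t)^2 - \phi_1\,\tau^t - \phi_2 \;\le\; 0,
\end{equation*}
with $\phi_1$ and $\phi_2$ exactly as defined in the lemma.

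Finally, since $\delta_2(t^c)>0$ by Remark \ref{rem:delta>0}, we have $\phi_2>0$, so the quadratic $(\tau^t)^2-\phi_1\tau^t-\phi_2$ has two real roots of opposite sign. The inequality therefore holds precisely on the interval between them, and because $\tau^t>0$ the only binding constraint is the positive root, giving \eqref{eq:lem:4}. The main bookkeeping obstacle is keeping the two phases of the CAV trajectory straight and substituting the Theorem \ref{theo:1} identity correctly; once this is done the discriminant is automatically nonnegative and the sign of $\phi_2$ cleanly selects the positive root, so no additional case analysis is required.
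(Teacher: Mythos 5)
Your proof is correct and follows essentially the same route as the paper's: propagate $p_1$ through the deceleration and cruise phases, eliminate $u_p$ via the identity $u_p(\tau^t)^2=-2\delta_2(t^c)$ from Theorem \ref{theo:1}, and reduce the terminal constraint to the quadratic $(\tau^t)^2-\phi_1\tau^t-\phi_2\le 0$. Your explicit observation that $\phi_2>0$ forces real roots of opposite sign, so that only the positive root binds, is a welcome detail the paper leaves implicit.
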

\begin{proof}
{Suppose that, for CAV $1\in\mathcal{N}$, $p_1(t^p)-p_1(t^c)\le L_c$. Using \eqref{eq:control_structure}, $p_1(t^p)=p_1(t^c+\tau^t)+v_1(t^s)\cdot \tau^s$, which yields}
\begin{gather}
    p_1(t^c+\tau^t)-p_1(t^c)+v_1(t^s)\cdot \tau^s \le L_c.\label{eq:lem4_1}
\end{gather}
From \eqref{eq:dynamics_pv} and \eqref{eq:control_structure}, we have, $p_1(t^c+\tau^t)=p_1(t^c)+v_1(t^c)\cdot \tau^t + \frac{1}{2}u_p (\tau^t)^2$, and $v_1(t^s)=v_1(t^c)+u_p \tau^t$. {Substituting $p_1(t^c+\tau^t)$, $v_1(t^s)$ into \eqref{eq:lem4_1}, $\tau^s=\tau^r+\bar{\eta}$ from Remark \ref{rem:tau_ps},
and using \eqref{eq:control_structure}-\eqref{eq:u_t_relation_1}, we have, $\tau^t - \frac{2\delta_2(t^c)\cdot \tau^s}{v_1(t^c)\cdot \tau^t}\le \frac{L_c+\delta_2(t^c)-v_1(t^c)\cdot \tau^s}{v_1(t^c)}$. Simplifying and letting $\phi_1=\frac{L_c+\delta_2(t^c)-v_1(t^c)\cdot (\tau^r+\bar{\eta})}{v_1(t^c)}$}, and $\phi_2=\frac{2\delta_2(t^c)\cdot (\tau^r+\bar{\eta})}{v_1(t^c)}$, the above equation yields a quadratic inequality $(\tau^t)^2-\phi_1 \tau^t - \phi_2 \le 0$, solving which yields \eqref{eq:lem:4}.
\end{proof}
The following result provides the condition under which for a given platoon formation time $t^p$ and platoon stabilization duration $\tau^s$, the corresponding platoon transition duration $\tau^t$ is feasible.
\begin{theorem}\label{theo:2}
For a CAV $1\in\mathcal{N}$ to complete the platoon transition step with its following HDV $2\in\mathcal{N}_{\text{HDV}}$ with control input $u_1(t)=u_p,~t\in[t^c,t^s]$ within the control zone of length $L_c$, a platoon transition duration $\tau^t$ is feasible if,
\begin{align}
   \max\bigg\{ \bigg(\frac{-2\delta_2(t^c)}{u_{\min}}\bigg)^{\frac{1}{2}},& \frac{2\delta_2(t^c)}{v_1(t^c)-v_{\min}} \bigg\}\le \tau^t \nonumber\\ 
   &\le  \frac{\phi_1+\sqrt{\phi_1^2+4\phi_2}}{2}, \label{eq:theo_2}
\end{align}
holds.
\end{theorem}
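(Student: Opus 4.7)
The plan is to derive Theorem \ref{theo:2} essentially by conjoining Lemmas \ref{lem:3} and \ref{lem:4}, since those two results already isolate the lower and upper bounds that together make up \eqref{eq:theo_2}. Lemma \ref{lem:3} characterizes the values of $\tau^t$ for which the associated control input $u_p$ obtained from \eqref{eq:u_t_relation_1} respects both the state bound $v_1(t) \ge v_{\min}$ and the control bound $u_p \ge u_{\min}$; Lemma \ref{lem:4} characterizes the values of $\tau^t$ for which the terminal position $p_1(t^p)$ remains within the control zone of length $L_c$ once the stabilization interval of duration $\tau^s = \tau^r + \bar{\eta}$ is accounted for.

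The proof itself would proceed in three short steps. First, I would invoke Lemma \ref{lem:3} to extract the lower bound on $\tau^t$. Second, I would invoke Lemma \ref{lem:4} to extract the upper bound. Third, I would observe that the two bounds arise from structurally disjoint requirements — vehicle dynamic limits on the one hand, control-zone geometry on the other — so their intersection needs no additional compatibility argument. Any $\tau^t$ lying in the interval \eqref{eq:theo_2} then yields, via \eqref{eq:u_t_relation_1}, an admissible $u_p \in [u_{\min},0)$ that simultaneously satisfies the state and control constraints in \eqref{eq:state_control_constraints} and the terminal constraint $p_1(t^p) \le L_c$ in \eqref{eq:platoon_cond_1}, which is precisely the notion of feasibility required.

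The only subtlety I foresee is not computational but conceptual: the interval in \eqref{eq:theo_2} may be empty. For small $L_c$, large $\delta_2(t^c)$, or a large stabilization duration $\tau^s$, the upper bound from Lemma \ref{lem:4} can fall below the lower bound from Lemma \ref{lem:3}, in which case no $\tau^t$ satisfies both sets of constraints and Problem \ref{prob:1} is infeasible as posed. I would therefore close with a brief remark noting that \eqref{eq:theo_2} should be read both as a sufficient condition on $\tau^t$ whenever the interval is non-empty and as an implicit feasibility requirement on the design parameters $L_c$, $v_{\min}$, $u_{\min}$, and $\bar{\eta}$, which naturally foreshadows the multi-vehicle extension in Theorem \ref{theo:4}.
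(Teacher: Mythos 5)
Your proposal matches the paper's proof exactly: Theorem \ref{theo:2} is obtained by conjoining the lower bound from Lemma \ref{lem:3} with the upper bound from Lemma \ref{lem:4}, which is all the paper itself does. Your added observation that the resulting interval may be empty (imposing an implicit compatibility condition on $L_c$, $v_{\min}$, $u_{\min}$, and $\bar{\eta}$) is a correct and worthwhile refinement, not a gap.
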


\begin{proof}
The proof follows directly from Lemmas \ref{lem:3} and \ref{lem:4}.
\end{proof}
\subsection{Extension of the Analysis for $N>2$}\label{sec:extension}
{For $N>2$, the CAV $1\in\mathcal{N}$ trailed by multiple HDVs $j\in\mathcal{N}_{\text{HDV}}$ and given $\mathcal{I}_1(t^c)$, we have the following conditions, $v_1(t^c)=v_j(t^c)=v_{\max}$ for all $j\in\mathcal{N}_{\text{HDV}}$ (Lemma \ref{lem:1}), and there exists $j\in\mathcal{N}_{\text{HDV}}$ such that $\delta_j(t^c)>0$ (Remark \ref{rem:delta>0}).}
{
\begin{definition}\label{def:cumDelta}
For a CAV $1\in\mathcal{N}$ followed by $N \in \mathcal{N_{\text{HDV}}}$ HDVs, the cumulative platoon gap $\Delta(t)$ at time $t\in[t^c,t^f]$ is, 
\begin{gather}
    \Delta(t) = p_1(t)-p_{N}(t)-\sum_{j=2}^{N} (s_j(t)+ l_c).
\end{gather}
\end{definition}}
%
%
In what follows, we extend the analysis presented in Theorems \ref{theo:1} and \ref{theo:2}, and derive results that enables platoon formation considering multiple trailing HDVs, i.e., $N>2$. The  following  theorem  provides  the  unconstrained  relation between  the  platoon  transition  duration $\tau^t$ and CAV control input parameter $u_p$ for $N>2$.
\begin{theorem}\label{theo:3}
{For a CAV $1\in\mathcal{N}$ followed by $N$ HDVs $j\in\mathcal{N}_{\text{HDV}}$, there exists an unconstrained control input parameter $u_p$ in \eqref{eq:control_structure} such that a vehicle platoon can be formed with HDVs $j\in\mathcal{N}$ at time $t=t^p$ according to the following relation,
\begin{equation}\label{eq:theo:3}
    2\Delta(t^c) + u_p\cdot(\tau^t)^2-2  u_p \tau^t\sum_{j=2}^{N-1} \rho_j = 0.
\end{equation}}
\end{theorem}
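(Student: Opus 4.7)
The plan is to mirror the strategy of Theorem \ref{theo:1}'s proof but account for all $N$ vehicles. The terminal condition for completion of the platoon transition step is $\Delta(t^s)=0$ (the cumulative platoon gap closes at $t^s=t^c+\tau^t$), and the goal is to express $\Delta(t^s)$ in closed form using the known motion of each vehicle during $[t^c,t^s]$ and then set it equal to zero. The vehicles split into three roles during the transition: (i) CAV $1$ decelerates with the constant input $u_p$ from \eqref{eq:control_structure}; (ii) HDV $N$ is in decoupled free-flow mode throughout $[t^c,t^s]$, since its platoon gap $\delta_N(t^c)>0$ does not close until $t^s$, so by Remark \ref{rem:ovm} and Lemma \ref{lem:1} it continues to cruise at $v_{\max}$; and (iii) each intermediate HDV $j\in\{2,\ldots,N-1\}$ is in coupled following mode and therefore tracks the speed trajectory of the leading CAV.

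Given these roles, I would compute the terminal states explicitly at $t^s$. Using \eqref{eq:dynamics_pv}, CAV $1$ satisfies $v_1(t^s)=v_{\max}+u_p\tau^t$ and $p_1(t^s)=p_1(t^c)+v_{\max}\tau^t+\frac{1}{2}u_p(\tau^t)^2$. For HDV $N$, we have $v_N(t^s)=v_{\max}$ and $p_N(t^s)=p_N(t^c)+v_{\max}\tau^t$, hence by \eqref{eq:s_i} the spacing $s_N(t^s)=s_N(t^c)$. For each intermediate HDV $j\in\{2,\ldots,N-1\}$, the coupled-mode tracking yields $v_j(t^s)=v_{\max}+u_p\tau^t$ and $p_j(t^s)-p_j(t^c)=v_{\max}\tau^t+\frac{1}{2}u_p(\tau^t)^2$; substituting into \eqref{eq:s_i} then gives $s_j(t^s)=s_j(t^c)+\rho_j u_p\tau^t$.

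Finally, I would substitute these terminal positions and spacings into Definition \ref{def:cumDelta} and set $\Delta(t^s)=0$. The $v_{\max}\tau^t$ terms in $p_1(t^s)-p_N(t^s)$ cancel, the $s_j(t^c)+l_c$ contributions combine with those of $j=N$ to reconstitute $\Delta(t^c)$, and the only surviving new terms are $\frac{1}{2}u_p(\tau^t)^2$ from CAV $1$'s quadratic displacement and $-u_p\tau^t\sum_{j=2}^{N-1}\rho_j$ from the speed-dependent growth of the intermediate spacings. This yields $\Delta(t^c)+\frac{1}{2}u_p(\tau^t)^2-u_p\tau^t\sum_{j=2}^{N-1}\rho_j=0$, and multiplying through by $2$ gives \eqref{eq:theo:3}.

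The main obstacle I expect is rigorously justifying step (iii), namely that each intermediate HDV tracks CAV $1$'s speed and position trajectories closely enough that the rigid-block approximation above holds exactly at $t^s$. The OVM dynamics \eqref{eq:hdv_dynamics} with perception delay $\eta_j$ generically produce only a delayed and smoothed response, so one would need to either invoke an idealized coupled-mode abstraction (treating the string of coupled followers as rigidly attached behind the CAV during the transition) or argue that the transient mismatch is absorbed into the platoon stabilization duration $\tau^s$ in line with Remark \ref{rem:tau_ps}, preserving the clean algebraic form of \eqref{eq:theo:3}.
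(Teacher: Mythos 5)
Your proposal is correct and follows essentially the same route as the paper: impose $\Delta(t^s)=0$, integrate the CAV's constant-deceleration kinematics and HDV $N$'s free-flow motion, and use $s_j(t^s)=s_j(t^c)+\rho_j u_p\tau^t$ for the intermediate followers to recover \eqref{eq:theo:3}. The one point you flag as an obstacle—that each intermediate HDV's speed at $t^s$ equals $v_1(t^s)$—is exactly the step the paper takes implicitly by writing $s_j(t^s)=\rho_j v_1(t^s)+s_0$, so your explicit acknowledgment of that idealization is, if anything, more careful than the published argument.
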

\begin{proof}
{At $t^s=t^c+\tau^t$, we require $p_1(t^s)-p_{N}(t^s)= \sum_{j=2}^{N-1} s_j(t^s)+ s_{N}(t^s)+\sum_{j=2}^{N} l_c$. Using \eqref{eq:dynamics_pv}, we have $p_1(t^c+\tau^t)=p_1(t^c)+v_1(t^c)\cdot \tau^t+\frac{1}{2}u_p\cdot (\tau^t)^2$, and $p_{N}(t^c+\tau^t)=p_{N}(t^c)+v_{N}(t^c)\cdot \tau^t$. Substituting $p_1(t^c+\tau^t)$ and $p_N(t^c+\tau^t)$ into the last equation and simplifying, we have,
\begin{equation}\label{eq:theo_3_1}
    p_1(t^c)-p_N(t^c)-\sum_{j=2}^{N-1} s_j(t^s)- s_{N}(t^s)-\sum_{j=2}^{N} l_c=-\frac{1}{2}u_p\cdot (\tau^t)^2.
\end{equation} 
At $t=t^s$, we have $s_{N}(t^s)=s_{N}(t^c)$, $s_j(t^s)=\rho_j v_1(t^s)+s_0$ and $s_j(t^c)=\rho_j v_j(t^c)+s_0$ for $j=2,\ldots, N-1$. With $v_1(t^s)=v_1(t^c)+u_p\tau^t$, we have $s_j(t^s)=s_j(t^c)+\rho_j u_p \tau^t$, $j=2,\ldots, N-1$. Using the last equations in \eqref{eq:theo_3_1}, we have $p_1(t^c)-p_N(t^c)-\sum_{j=2}^{N} s_j(t^c)- \sum_{j=2}^{N-1} \rho_j u_p \tau^t -\sum_{j=2}^N l_c=-\frac{1}{2}u_p\cdot (\tau^t)^2$, and using Definition \ref{def:cumDelta}, the results follows.}
\end{proof}
For $N>2$, the following result provides the condition under which for a given platoon formation time $t^p$ and platoon stabilization duration $\tau^s$, the corresponding platoon transition duration $\tau^t$ in Theorem \ref{theo:3} is feasible.
\begin{theorem}\label{theo:4}
For a CAV $1\in\mathcal{N}$ to complete the platoon transition step with its following $N$ HDVs $j\in\mathcal{N}_{\text{HDV}}$ with control input $u_1(t)=u_p,~t\in[t^c,t^s]$ within the control zone of length $L_c$, a platoon transition duration $\tau^t$ is feasible if,
{
\begin{align}\label{eq:theo_4}
   \max\bigg\{ \bigg( C_1+\sqrt{C_1^2-\frac{2\Delta(t^c)}{u_{\min}}} \bigg), 2C_1+\frac{2\Delta(t^c)}{v_1(t^c)-v_{\min}} \bigg\}\\ \le \tau^t 
   \le  \frac{\phi_3+\sqrt{\phi_3^2+4\phi_4}}{2},\nonumber
\end{align}
holds,
where $C_1:=\sum_{j=2}^{N-1} \rho_j$, $C_2:=L_c-v_1(t^c)\cdot\tau^s$, $\phi_3:=\frac{2C_1v_1(t^c)+\Delta(t^c)+C_2}{v_1(t^c)}$, and $\phi_4:=\frac{2\Delta(t^c)\cdot \tau^s-2C_1C_2}{v_1(t^c)}$.}
\end{theorem}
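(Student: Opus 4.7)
The plan is to mirror the proof of Theorem \ref{theo:2} by promoting Lemmas \ref{lem:3} and \ref{lem:4} from the $N=2$ setting (which rested on \eqref{eq:u_t_relation_1}) to the general $N>2$ setting via the identity \eqref{eq:theo:3} of Theorem \ref{theo:3}. Concretely, the left-hand side of \eqref{eq:theo_4} should come from the state and control constraints \eqref{eq:state_control_constraints}, while the right-hand side should come from the terminal spatial constraint $p_1(t^p) \le L_c$ in \eqref{eq:platoon_cond_1}. The final result then follows by intersecting the two bounds exactly as in Theorem \ref{theo:2}.

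For the lower bound, I would first substitute $u_p = u_{\min}$ into \eqref{eq:theo:3}; unlike in Lemma \ref{lem:3}, this now produces a genuine quadratic in $\tau^t$ with positive root $C_1 + \sqrt{C_1^2 - 2\Delta(t^c)/u_{\min}}$ (the discriminant is non-negative because $u_{\min} < 0$ and $\Delta(t^c) > 0$ by Remark \ref{rem:delta>0} together with Definition \ref{def:cumDelta}). Any shorter $\tau^t$ would force $u_p < u_{\min}$, violating \eqref{eq:state_control_constraints}. Next, I would enforce $v_1(t^s) = v_1(t^c) + u_p \tau^t \ge v_{\min}$ and use the rearrangement $u_p \tau^t = -2\Delta(t^c)/(\tau^t - 2C_1)$, which follows immediately from \eqref{eq:theo:3} provided $\tau^t > 2C_1$; standard algebra then gives $\tau^t \ge 2C_1 + 2\Delta(t^c)/(v_1(t^c) - v_{\min})$. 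Taking the larger of the two bounds yields the left inequality of \eqref{eq:theo_4}.

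For the upper bound, I would start from $p_1(t^p) - p_1(t^c) \le L_c$, expand $p_1(t^p) = p_1(t^c) + v_1(t^c)\tau^t + \frac{1}{2} u_p (\tau^t)^2 + v_1(t^s) \tau^s$ with $\tau^s = \tau^r + \bar{\eta}$ by Remark \ref{rem:tau_ps}, eliminate $\frac{1}{2} u_p (\tau^t)^2 = -\Delta(t^c) + u_p \tau^t C_1$ using \eqref{eq:theo:3}, and replace the remaining $u_p \tau^t$ by the same rearrangement as above. Clearing the denominator $\tau^t - 2C_1$ (whose sign is fixed by the lower bound), collecting terms, and normalizing by $v_1(t^c)$ should produce the quadratic inequality $(\tau^t)^2 - \phi_3 \tau^t - \phi_4 \le 0$, whose positive root is precisely the upper bound in \eqref{eq:theo_4}.

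The main obstacle I anticipate is bookkeeping: the extra cross term $-2 u_p \tau^t C_1$ in \eqref{eq:theo:3} upgrades several relations from the $N=2$ case from linear to quadratic in $\tau^t$, and at each substitution the signs of $u_p$, $u_{\min}$, $\tau^t - 2C_1$, and $v_1(t^c) - v_{\min}$ must be tracked carefully so the direction of each inequality is preserved. The key structural observation that unlocks the whole argument is that feasibility forces $\tau^t > 2C_1$, which makes $u_p \tau^t = -2\Delta(t^c)/(\tau^t - 2C_1)$ well defined and strictly negative; I would flag this explicitly before clearing any denominators, after which both the lower- and upper-bound calculations collapse to routine quadratic manipulations.
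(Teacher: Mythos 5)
Your proposal is correct and follows essentially the same route as the paper: the left bound from substituting $u_p=u_{\min}$ into \eqref{eq:theo:3} and from enforcing $v_1(t^s)\ge v_{\min}$, and the right bound by redoing Lemma \ref{lem:4}'s argument with \eqref{eq:theo:3} in place of \eqref{eq:u_t_relation_1}, arriving at $(\tau^t)^2-\phi_3\tau^t-\phi_4\le 0$. You actually supply more detail than the paper does (the explicit rearrangement $u_p\tau^t=-2\Delta(t^c)/(\tau^t-2C_1)$ and the sign argument for $\tau^t-2C_1$, which the paper leaves implicit), and the algebra checks out against the stated $\phi_3$ and $\phi_4$.
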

\begin{proof}
{Suppose that, for CAV $1\in\mathcal{N}$, $u_p=u_{\min}$ yields a corresponding platoon transition duration $\tau^{t_1}$. From \eqref{eq:theo:3}, we have $(\tau^{t_1}) =  C_1+\sqrt{C_1^2-\frac{2\Delta(t^c)}{u_{\min}}}$, where $C_1:=\sum_{j=2}^{N-1} \rho_j$.
Therefore, for any $\tau^t$ to be feasible such that $u_p\in [u_{\min},0)$, we require $\tau^t\ge \tau^{t_1}$, which yields the inequality with the first term in \eqref{eq:theo_4}.
\newline
Now for the second inequality term, since $u_p\in[u_{\min},0)$, we require that $v_1(t^c+\tau^t)\ge v_{\min}$ to satisfy the state constraint in \eqref{eq:state_control_constraints}. Using \eqref{eq:dynamics_pv}, we have, $v_1(t^c+\tau^{t})=v_1(t^c)+u_p \cdot \tau^{t}$.
 Substituting $v_1(t^c+\tau^t)$ in the above inequality, we get $ u_p\cdot \tau^t \ge v_{\min}-v_1(t^c)$. Substituting $u_p$ from \eqref{eq:theo:3} and simplifying, we have the inequality with the second term in \eqref{eq:theo_4}. Since both left-hand side inequalities mentioned above give lower bounds on $\tau^t$, we simply take their maximum and get the left inequality of \eqref{eq:theo_4}.
\newline
Finally, using the result of Theorem \ref{theo:3} and following similar steps to those in the proofs of Lemma \ref{lem:4}, we derive the right inequality of \eqref{eq:theo_4}.}
\end{proof}
%
\section{Numerical Example}\label{sec:sim}
To demonstrate the performance of the proposed platoon formation framework, we present the simulation considering $\mathcal{N}=\{1,2,3\}$ consisting of a CAV $1$ followed by two HDVs $2$ and $3$, using numerical simulation in MATLAB R2020b. 
%
{For a desired platoon formation time $t^p=47.2$ s and a given platoon stabilization duration $\tau^s=5$ s, $\tau^t=42.2$ s is feasible according to Theorem \ref{theo:4}, and we use Theorem \ref{theo:3} to compute the corresponding control input $u_p$ for CAV $1$. The headway trajectories of HDVs $2$ and $3$ converge to the equilibrium value and remain time invariant for all $t> t^p$, as shown in Fig. \ref{fig:platoon_result} (bottom). Since the conditions in \eqref{eq:platoon_cond_1} are satisfied for all $t\ge t^p$, the platoon formation is completed at time $t=t^p$ s as indicated by the position trajectories shown in Fig. \ref{fig:platoon_result} (top).}
\begin{figure}
    \centering
    \includegraphics[width=3.8in]{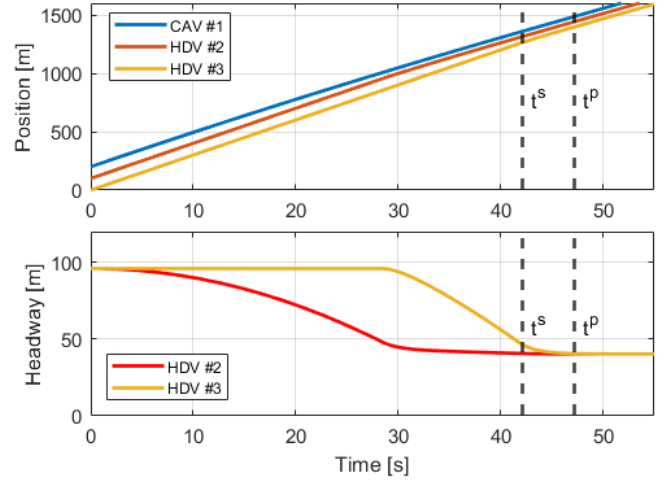}
    \caption{Platoon formation for $N=3$, where the position (top) and headway (bottom) of the vehicles are illustrated.}
    \label{fig:platoon_result}
\end{figure}

{In Fig. \ref{fig:platoon_robustness}, we show the robustness of the proposed framework in terms of \emph{platoon formation deviation} representing the percentage deviation of the actual platoon formation time $t^{ap}$ from the desired platoon formation time $t^p$, i.e., $\frac{t^{ap}-t^{p}}{t^p}\times100 [\%]$, for $N= 2,~3 \text{ and }4$. Here, positive platoon formation deviation indicates delayed platoon formation in actual simulation, and conversely, negative deviation indicates platoon formation before $t^p$. Figure \ref{fig:platoon_robustness}(a) shows that the platoon is formed within $2.5\%$ deviation for all admissible $\tau^t$, where the higher $\tau^t$ values minimizes delayed platoon formation instances. The robustness of the framework under different perception delay $\eta_i\in[0,1]$ is showed in Fig. \ref{fig:platoon_robustness}(b). Since the platoon formation deviations are mostly non-positive, the conservative consideration of $\bar{\eta}$ guarantees platoon formation within the desired platoon formation time $t^p$. 

Finally, we consider the variation of two car-following parameters, namely the desired time gap $\rho_i\in[0.5,1.5]$ and driver's sensitivity coefficient $\alpha\in[1,2]$, to investigate the performance of the proposed framework under random human driving behavior based on \eqref{eq:hdv_dynamics}, as shown in Fig. \ref{fig:platoon_robustness}(c) and (d), respectively. The proposed framework is mostly robust against variation of $\rho_i$, and shows delayed platoon formation only near the maximum value of $\rho_i$. In contrast, the proposed framework shows delayed platoon formation with $<3\%$ deviation for variation of $\alpha$. Note that, since $\tau^s$ is dependent on $\alpha$, the appropriate computation of $\tau^s$ can minimize the platoon formation deviation with varying $\alpha$.}

{Supplementary videos of the simulation
and experimental results of the proposed framework as well as
the parameters used for the simulation results can be found at: \href{https://sites.google.com/udel.edu/platoonformation}{https://sites.google.com/udel.edu/platoonformation}}.
\section{Discussion and concluding Remarks}\label{sec:conc}
In this paper, we presented a framework for platoon formation under a mixed traffic environment, where a leading CAV derives and implements its control input to force the following HDVs to form a platoon. Using a predefined car-following model, we
provided a complete, analytical solution of the CAV control input intended for the platoon formation. We also provided
a detailed analysis of the platoon formation framework, and provided conditions under which a feasible platoon formation time exists. Finally, we presented numerical example to validate the robustness of our proposed framework.

{A direction for future research should extend the proposed framework to make it agnostic to additional car-following models. Ongoing research considers the notion of optimality to derive energy- or time-optimal platoon formation framework under relaxed assumption on the steady-state traffic flow. }
\begin{figure}
    \centering
    \includegraphics[width=3.8in]{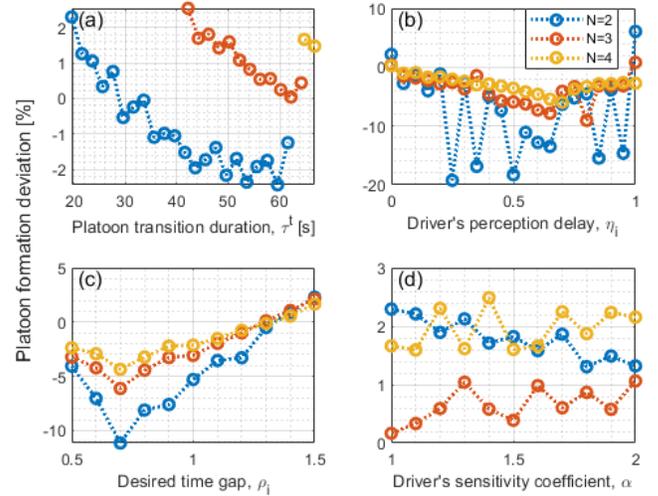}
    \caption{{Percentage deviation of actual platoon formation time vs. desired platoon formation time for $N=2,~3$ and $4$ under the consideration of different platoon transition duration $\tau^t$ (subfigure (a)), perception delay $\eta_i$ (subfigure (b)), and car-following model parameters $\rho_i$ and $\alpha$ (subfigure (c)-(d)), respectively.}}
    \label{fig:platoon_robustness}
\end{figure}
\bibliographystyle{IEEEtran}
\bibliography{IDS_Publications_06112021,acc_pt_vd_ref, platoon}

%

%

\end{document}